\theoremstyle{plain}
\newtheorem{theorem}{Theorem}[section]
\newtheorem{lemma}[theorem]{Lemma}
\newtheorem{proposition}[theorem]{Proposition}
\newtheorem{corollary}[theorem]{Corollary}
\theoremstyle{definition}
\theoremstyle{remark}
\newtheorem{remark}[theorem]{Remark}
\title{Determinants of $(-1,1)$-matrices of the skew-symmetric type: a cocyclic approach}
\author{
%
V. \'Alvarez
\and
J. A. Armario
\and
M. D. Frau
\and
F. Gudiel
}
\date{}
\address{\small \rm  Depto Matemática Aplicada I\\
 Universidad de Sevilla\\ Avda. Reina Mercedes s/n 41012 Sevilla\\ Spain.}
\email{valvarez@us.es}
\email{armario@us.es}
\email{mdfrau@us.es}
\email{gudiel@us.es}
\begin{document}

\begin{abstract}
An $n$ by $n$ skew-symmetric type $(-1,1)$-matrix $K=[k_{i,j}]$ has $1$'s on the main diagonal and $\pm 1$'s elsewhere with $k_{i,j}=-k_{j,i}$.
The largest possible determinant of such a matrix $K$ is an interesting problem.
The literature is extensive for $n\equiv 0 \mod  4$ (skew-Hadamard matrices), but  for $n\equiv 2\mod 4$ there are few results  known for this question.
In this paper we approach this problem constructing cocyclic matrices over the dihedral group of $2t$ elements, for $t$ odd,  which are equivalent to  $(-1,1)$-matrices of skew type. Some explicit calculations have been done up to $t=11$.
To our knowledge, the upper bounds on the maximal determinant in orders 18 and 22 have been improved.
\end{abstract}
\keywords{$(-1,1)$-matrix of skew type\and Cocyclic matrices \and  Maximal determinants}

\maketitle
\section{Motivation of the problem - introduction }

Let $g(n)$  denote the maximum  determinant of all   $n\times n$  matrices with elements $\pm 1$. Here and throughout this paper, for convenience, when we say determinant we mean the absolute value of the determinant. The question of finding $g(n)$ for any integer $n$ is an old one which remains unasnwered in general. We ignore here the trivial cases $n=1,2$. In 1893 Hadamard gave the bound $n^{n/2}$ for $g(n)$. This bound can be attained only if $n$ is a multiple of $4$. A matrix that attains it is called a {\it Hadamard matrix}, and it is an outstanding conjecture that one exists for any multiple of $4$. At the time of writing, the smallest order for which the existence of a Hadamard matrix is in question is 668. If $n$ is not a multiple of $4$, $g(n)$ is not  known in general, but tighter bounds exist. For $n\equiv 2 \mod 4$, Ehlich \cite{Ehl64} and independently Wojtas \cite{Woj64} proved that
 \begin{equation} \label{ewb1}
 g(n)\leq (2n-2)(n-2)^{\frac{1}{2}n-1}.
 \end{equation}
Moreover, in order for equality to hold, it is required that there exists a $(-1,1)$-matrix $M$ of order $n$ such that $MM^T=\left( \begin{array}{cc} L&0\\ 0&L \end{array}\right)$, where $L=(n-2)I_{\frac{n}{2}}+2J_{\frac{n}{2}}$. Here, as usual, $I_n$ denotes the identity matrix of order $n$, and $J_n$ denotes the $n\times n$ matrix all of whose entries are equal to one. In these circumstances, it may be proved that, in addition, $2n-2$ is the sum of two squares,  a condition which is believed to be sufficient  (order 138 is the lowest for which the question has not been settled yet, \cite{FKS04}). The interested reader is addressed to \cite{KO06} and the website \cite{OS05} for further information on what is known about maximal determinants.

There is a companion theory for matrices with $0$'s on the main diagonal and $\pm 1$ elsewhere.
Let $f(n)$ denote the maximum determinant of all $n\times n$ matrices   with elements $0$ on the main diagonal and $\pm 1$ elsewhere.
It is well-known that $f(n)\leq (n-1)^{n/2}$. This can be attained only when $n$ is even, and a matrix $C$ which does so is called a {\it conference matrix}. For $n$ odd the question has been hardly worked out  \cite{BKMS97}.


A matrix $M$ is {\it symmetric} if $M=M^T$. A matrix is {\it skew-symmetric} (or {\it skew}) if $M=-M^T$. The following theorem analyzes the structure of the conference matrices \cite[p. 307]{IK06}:

\begin{theorem}
If $C$ is an $n\times n$ conference matrix, then either $n\equiv 0 \mod  4$ and $C$ is equivalent to a skew matrix, or $n\equiv 2 \mod  4$ and $C$ is equivalent to a symmetric matrix and such a matrix cannot exist unless $n-1$ is the sum of two squares: thus they cannot exist for orders $22,34,58,70,78,94.$ The first values for which the existence of symmetric conference matrices is unknown are $n=66,86$.
\end{theorem}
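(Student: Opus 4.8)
The plan is to reduce the statement to three inputs: a structural dichotomy for the normalized matrix, an order constraint coming from the skew case, and a number-theoretic obstruction coming from the symmetric case. Recall that two conference matrices are \emph{equivalent} if one is obtained from the other by permuting rows and columns and by multiplying rows and columns by $-1$; such operations preserve $CC^T=(n-1)I$. First I would normalize $C$, using row and column negations, so that every off-diagonal entry of the first row and of the first column equals $+1$. Since $CC^T=C^TC=(n-1)I$, each pair of distinct rows (and each pair of distinct columns) is orthogonal, and computing the inner product of rows $i$ and $j$ (for $i,j\ge 2$) shows that they disagree in exactly $(n-2)/2$ of the remaining coordinates; in particular $n$ must be even.

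The heart of the argument, and the step I expect to be the main obstacle, is the \emph{dichotomy lemma}: a normalized conference matrix is either symmetric or skew-symmetric, i.e. $c_{ij}c_{ji}$ takes the same value for all off-diagonal pairs. I would prove this by a careful bookkeeping of the orthogonality relations among a well-chosen triple of rows and the corresponding columns, forcing $c_{ij}=c_{ji}$ for all $i,j$ or $c_{ij}=-c_{ji}$ for all $i,j$; equivalently one shows that $C^2$ is forced to be the scalar matrix $\pm(n-1)I$, whence $C^T=(n-1)C^{-1}=\pm C$. The delicate point is ruling out a ``mixed'' matrix in which some pairs are symmetric and others skew, which is exactly where the combinatorial counting must be pushed through globally rather than pairwise.

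With the dichotomy in hand, the residue of $n$ modulo $4$ is pinned down by two complementary observations. If $C$ is skew, then $S=I+C$ has $\pm 1$ entries and satisfies $SS^T=CC^T+C+C^T+I=nI$ because $C+C^T=0$; thus $S$ is a (skew-)Hadamard matrix, forcing $n\equiv 0\bmod 4$. If instead $C$ is symmetric, then $C^2=(n-1)I$, and a classical rational-congruence argument in the spirit of Bruck--Ryser--Chowla (see \cite{IK06}) shows that a symmetric conference matrix of order $n$ can exist only when $n-1$ is a sum of two squares; since a sum of two squares is never $\equiv 3\bmod 4$, this already excludes $n\equiv 0\bmod 4$. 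Combining these: for $n\equiv 2\bmod 4$ the skew alternative is impossible, so $C$ is symmetric; for $n\equiv 0\bmod 4$ the symmetric alternative is impossible, so $C$ is skew. This establishes the two main clauses.

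Finally, the nonexistence list follows by applying the sum-of-two-squares obstruction to the symmetric case. For $n\in\{22,34,58,70,78,94\}$ we have $n\equiv 2\bmod 4$, so any conference matrix would be symmetric and would force $n-1$ to be a sum of two squares; but $n-1\in\{21,33,57,69,77,93\}$, and each of these carries a prime factor congruent to $3$ modulo $4$ to an odd power ($21=3\cdot 7$, $33=3\cdot 11$, and so on), so none is a sum of two squares and no such matrix exists. By contrast $n=66$ and $n=86$ give $n-1=65=8^2+1^2$ and $n-1=85=9^2+2^2$, both sums of two squares, so the obstruction is silent; as no construction is known for these orders, they are recorded as the first open cases. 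I would close by noting that the even-order remark, the Hadamard order theorem, and the rational-congruence result are all standard, so the only genuinely technical work is the dichotomy lemma.
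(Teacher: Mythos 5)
The paper does not actually prove this theorem: it is quoted verbatim from Ionin and Kharaghani \cite[p.~307]{IK06} as background, so there is no internal proof to compare yours against. Judged on its own terms, your architecture is the standard one (normalize, establish a symmetric/skew dichotomy, pin down $n \bmod 4$, then invoke the sum-of-two-squares obstruction), and the endgame --- checking that $21, 33, 57, 69, 77, 93$ each have a prime $\equiv 3 \pmod 4$ to an odd power while $65$ and $85$ are sums of two squares --- is correct. But the one step you yourself flag as ``the main obstacle,'' the dichotomy lemma, is left unproven, and that is the entire content of the theorem's first two clauses; as written, the proposal is a plan rather than a proof.

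The good news is that your worry about ``mixed'' matrices, and the machinery you propose to deal with it (triples of rows, a global counting argument, the detour through $S=I+C$ being Hadamard and through the two-squares obstruction to exclude residues), is unnecessary: a single pairwise count settles everything at once. Normalize so that $c_{1k}=c_{k1}=1$ for $k\ge 2$ (note this normalization is incompatible with $C$ itself being skew, so in the skew case you must negate the first row at the end --- a wrinkle your write-up glosses over). Fix $i\neq j$ with $i,j\ge 2$ and let $a,b,c,d$ count the indices $k\notin\{1,i,j\}$ with $(c_{ik},c_{jk})$ equal to $(+,+),(+,-),(-,+),(-,-)$ respectively. Then $a+b+c+d=n-3$; orthogonality of rows $1$ and $i$ gives $a+b-c-d=-c_{ij}$; orthogonality of rows $1$ and $j$ gives $a-b+c-d=-c_{ji}$; and orthogonality of rows $i$ and $j$ gives $a-b-c+d=-1$. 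Summing, $4a=n-4-c_{ij}-c_{ji}$, hence $c_{ij}+c_{ji}\equiv n \pmod 4$. Since $c_{ij}+c_{ji}\in\{-2,0,2\}$, this forces $c_{ij}=-c_{ji}$ for every pair when $n\equiv 0 \pmod 4$ and $c_{ij}=c_{ji}$ for every pair when $n\equiv 2\pmod 4$: the dichotomy and the residue determination drop out simultaneously, pair by pair, with no global bookkeeping. What genuinely cannot be avoided is the Belevitch/van Lint--Seidel rational-congruence argument for the necessity of $n-1$ being a sum of two squares in the symmetric case; citing that as a classical black box, as you do (and as the paper implicitly does), is reasonable, but the dichotomy step should be written out as above rather than deferred.
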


It is known that if $C$ is a skew conference matrix then $H=C+I$ is a Hadamard matrix.  Which is called a {\it skew Hadamard matrix}. Let us point out that $H+H^T=2I$.

Skew conference and skew Hadamard matrices are of great interest because of their elegant structure, their beautiful properties and their applications to Coding Theory, Combinatorial Designs and Cryptography.

Throughout the paper $f_k(n)$  will denote the maximum determinant of all $n\times n$ skew matrices with elements $0$ on the main diagonal and $\pm 1$ elsewhere. Respectively, $g_k(n)$ will denote the maximum determinant of all $(-1,1)$ matrices of skew type of order $n$. Let us observe that $N$ is an $n\times n$ skew matrix with elements $0$ on the main diagonal and $\pm1$ elsewhere if, and only if, $N+I$ is a $(-1,1)$-matrix of skew type with order $n$.

In what follows, we will deal with these two problems, originally posted by P. Cameron in his website \cite{Cam11} and suggested by Dennis Lin:
\begin{enumerate}
 \item Compute $f_k(n)$ and $g_k(n)$ for different integers $n$.
 \item  Let $N$ be an $n\times n$ skew matrix with elements $0$ on the main diagonal and $\pm 1$ elsewhere; decide whether or not the following equivalence holds. It will be called  {\em ``Lin's correspondence"}.
         $$\det N=f_{k}(n)\Longleftrightarrow \det(N+I)=g_k(n), \,\,n\,\mbox{even}.$$
 \end{enumerate}

For $n\equiv 0 \mod 4$, concerning the first question it is conjectured that $f_k(n)=n^{n/2}$ and $g_k(n)=(n-1)^{n/2}$. The first open order is $n=276$.
On the second question, it is known that  $C$ is a  skew conference matrix if and only if $C+I$ is a  skew Hadamard matrix. So the equivalence above holds.

We shall here be concerned with the case $n\equiv 2\mod 4$, $n\neq 2$ and this will be implicitly assumed in what follows.

Concerning $f_k(n)$ there exist the following upper \cite{AF13} and lower \cite{Cam11} bounds,
\begin{itemize}
\item \begin{equation}\label{e-wbound01}
f_k(n)\leq (2n-3)(n-3)^{\frac{1}{2}n-1}
\end{equation}
 and equality  holds if and only if
there exists a skew matrix $N$ with
\begin{equation}\label{idenecsdc}
N N^T=N^TN=\left[\begin{array}{cc} L' & 0 \\ 0 & L' \end{array}\right],
\end{equation}
where $L'=(n-3)I+2J$.
Will Orrick noticed in \cite{Cam11} that equality in (\ref{e-wbound01}) can only hold if  $2n-3=x^2$ where $x$ is an integer.

\item Assuming that a conference matrix of order $n+2 $ exists,  $$f_k(n)\geq (n+1)^{\frac{1}{2}n-1}.$$
\end{itemize}

For $g_k(n)$, we have:
\begin{itemize}
 \item Elich-Wojtas' upper bound (\ref{ewb1}). That is,
    \begin{equation}\label{stew}
    g_k(n)\leq (2n-2)(n-2)^{\frac{1}{2}n-1}.
     \end{equation}
\item
  An analogous lower bound was given by Cameron \cite{Cam11}   under the hypothesis that a conference matrix of order $n+2$ exists
$$ g_k(n)\geq 2(n+2)^{\frac{1}{2}n-1}.$$
\end{itemize}
 Moreover, the following result from \cite{AF13} gives an affirmative answer to question 2,
\begin{theorem}\label{maxdeterdosmundos}
Let $K$ be a $(-1,1)$-matrix of skew type.
$$\mbox{det}\,K=(2n-2)(n-2)^{\frac{1}{2}n-1}\Leftrightarrow \mbox{det}\,(K-I)=(2n-3)(n-3)^{\frac{1}{2}n-1}.$$
\end{theorem}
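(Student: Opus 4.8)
The plan is to pass through the two equality characterizations already recorded in the excerpt and to link them by a single matrix identity. Set $N=K-I$, so that $N$ is the skew matrix with zero diagonal and $\pm1$ off-diagonal entries associated with $K$; the assertion to be proved is precisely that $\det N$ meets the skew upper bound \eqref{e-wbound01} if and only if $\det K$ meets the Ehlich--Wojtas bound \eqref{ewb1}. Since $K$ is of skew type, its off-diagonal entries satisfy $k_{i,j}=-k_{j,i}$ while $k_{i,i}=1$, whence $K+K^T=2I$, equivalently $K^T=2I-K$. This relation is the only structural input I shall need.

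The key step is to express $NN^T$ through $KK^T$. Using $N=K-I$ and $N^T=K^T-I=I-K$,
\begin{equation}\label{keyid}
NN^T=(K-I)(I-K)=2K-K^2-I,\qquad KK^T=K(2I-K)=2K-K^2,
\end{equation}
so that $NN^T=KK^T-I$. Because $N$ is skew we also have $N^TN=(-N)N=-N^2=N(-N)=NN^T$, so both one-sided products of $N$ coincide and equal $KK^T-I$.

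With \eqref{keyid} in hand the proof reduces to the translation $L'=L-I$ between the two target matrices. For the forward implication, assume $\det K=(2n-2)(n-2)^{\frac{1}{2}n-1}$. By the equality condition attached to \eqref{ewb1} this forces $KK^T=\mathrm{diag}(L,L)$ with $L=(n-2)I_{n/2}+2J_{n/2}$. Then \eqref{keyid} gives $NN^T=N^TN=\mathrm{diag}(L-I,L-I)=\mathrm{diag}(L',L')$ with $L'=(n-3)I+2J$, which is exactly the equality condition for the skew bound \eqref{e-wbound01}; since $\det(NN^T)=(\det L')^2$ and the eigenvalues of $L'$ are $2n-3$ (once) and $n-3$ (with multiplicity $\frac{1}{2}n-1$), one obtains $\det N=(2n-3)(n-3)^{\frac{1}{2}n-1}$. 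The converse runs identically in reverse: $\det N$ meeting \eqref{e-wbound01} yields $NN^T=\mathrm{diag}(L',L')$, whence $KK^T=NN^T+I=\mathrm{diag}(L'+I,L'+I)=\mathrm{diag}(L,L)$, and the eigenvalue computation for $L$ (eigenvalues $2n-2$ once and $n-2$ with multiplicity $\frac{1}{2}n-1$) returns $\det K=(2n-2)(n-2)^{\frac{1}{2}n-1}$.

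There is no serious obstacle once the identity $NN^T=KK^T-I$ is noticed; the entire content of the theorem is that the additive shift $K=N+I$ of the Gram matrix matches exactly the additive shift $L=L'+I$ between the two extremal block matrices. The only points demanding care are verifying that the characterization of equality in the Ehlich--Wojtas bound may be used in both directions (equality forces the block form, and conversely the block form forces equality via $|\det K|=\det L$), and checking that the skew condition genuinely requires both $NN^T$ and $N^TN$ to be block-diagonal, which here is automatic from $N^T=-N$.
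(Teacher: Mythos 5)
Your argument is correct. Note first that the paper contains no proof of Theorem \ref{maxdeterdosmundos}: it is imported from \cite{AF13}, so there is no internal proof to compare against. Your route --- using $K+K^T=2I$ to get $(K-I)(K-I)^T=KK^T-I$, then passing through the equality characterizations of the bounds \eqref{ewb1} and \eqref{e-wbound01} and the eigenvalue computations $\det L=(2n-2)(n-2)^{\frac{1}{2}n-1}$ and $\det L'=(2n-3)(n-3)^{\frac{1}{2}n-1}$ --- is the natural argument and, judging from the surrounding discussion, the intended one in the cited source. Two small points of care. First, equality in \eqref{ewb1} forces $KK^T$ to equal $\mathrm{diag}(L,L)$ only up to simultaneous signed permutation, i.e.\ $KK^T=S\,\mathrm{diag}(L,L)\,S^T$ for some $(0,1,-1)$-monomial $S$; since subtracting $I$ commutes with this conjugation, $NN^T=S\,\mathrm{diag}(L',L')\,S^T$ and the determinant conclusion $(\det N)^2=(\det L')^2$ is unaffected, but the block form should not be asserted literally. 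Second, you are using the pointwise form of both equality characterizations (every individual matrix attaining the bound has the stated Gram structure), whereas the paper phrases them existentially (``there exists a matrix with $MM^T=\cdots$''); the pointwise form is what the Ehlich-type arguments behind \eqref{ewb1} and \eqref{e-wbound01} actually establish, so this is justified, but it deserves an explicit word. Neither point is a gap.
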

  This result implies that equality in (\ref{stew}) can only hold if  $2n-3=x^2$ where $x$ is an integer. Therefore, a $(-1,1)$-matrix of skew type reaching Ehlich-Wojtas' bound cannot exist for orders 10, 18, 22, 30 and so on.

An alternative proof for the above lower bounds can be done using a corollary (posted in \cite[Corollary 1]{KMNS12}) of the following result, originally proved by Szollosi.
\begin{theorem}[Szollosi \cite{Szo10}]
Let $M=\left[\begin{array}{cc}
X & Y \\
Z & W
\end{array}\right]$ be an $n\times n$ orthogonal matrix which is $(l,n-l)$-partitioned, i.e., $X$ is $l\times l$, $W$ is $(n-l)\times (n-l)$, $Y$ is $l\times (n-l)$ and $Z$ is $(n-l)\times l$. Then $\det X=\det W$.
\end{theorem}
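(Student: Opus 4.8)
The matrix $M$ is orthogonal, so $M^{-1}=M^T$ and $(\det M)^2=\det(M^TM)=1$, whence $\det M=\pm 1$. The plan is to read off $\det X$ and $\det W$ as a pair of \emph{complementary} minors, one belonging to $M^{-1}$ and one to $M$, and then to relate them through Jacobi's classical identity for the minors of an inverse. Because $M^{-1}=M^T$, the top-left $l\times l$ minor of $M^{-1}$ is exactly $\det(X^T)=\det X$, while the bottom-right $(n-l)\times(n-l)$ minor of $M$ itself is $\det W$. These two minors sit on complementary index sets, so Jacobi's theorem should tie them together.

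Concretely, I would invoke Jacobi's identity in the form: for an invertible matrix $A$ and index sets $P,Q\subseteq\{1,\dots,n\}$ of equal size, the minor of $A^{-1}$ on rows $P$ and columns $Q$ equals $(-1)^{\sigma(P)+\sigma(Q)}(\det A)^{-1}$ times the complementary minor of $A$ on rows $Q^{c}$ and columns $P^{c}$, where $\sigma(S)=\sum_{i\in S}i$ and complements are taken in $\{1,\dots,n\}$. Applying this with $A=M$ and $P=Q=\{1,\dots,l\}$, the left-hand minor is $\det\big((M^T)[\{1,\dots,l\}\mid\{1,\dots,l\}]\big)=\det(X^T)=\det X$, the complementary minor on rows and columns $\{l+1,\dots,n\}$ is $\det W$, and the sign is $(-1)^{l(l+1)}=+1$. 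Hence $\det X=(\det M)^{-1}\det W$.

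Since $M$ is orthogonal we have $\det M=\pm 1$, so this reads $\det X=\pm\det W$; under the convention adopted throughout this paper, where $\det$ denotes the absolute value of the determinant, this is precisely $\det X=\det W$. I do not expect a serious obstacle here: the only genuinely delicate point is the bookkeeping in Jacobi's identity, namely getting the transpose right (the minor of $M^{-1}=M^T$ on $\{1,\dots,l\}$ is $X^T$, not $X$, though its determinant is still $\det X$) and verifying that the sign $(-1)^{\sigma(P)+\sigma(Q)}$ collapses to $+1$ for this symmetric choice of index sets.

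Alternatively, one may give a self-contained proof by Schur complements, avoiding any appeal to Jacobi. When $X$ is invertible, block elimination gives $\det M=\det X\cdot\det(W-ZX^{-1}Y)$, and the $(2,2)$-block of $M^{-1}=M^T$, namely $W^T$, equals the inverse of the Schur complement $W-ZX^{-1}Y$; hence $\det(W-ZX^{-1}Y)=(\det W)^{-1}$ and again $\det X=\det M\cdot\det W=\pm\det W$. The one subtlety in this route is the degenerate case $\det X=0$ (or $\det W=0$), which would have to be handled by a continuity argument on the orthogonal group; the Jacobi route is preferable precisely because it requires only the invertibility of $M$, which is automatic.
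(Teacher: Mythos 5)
The paper does not actually prove this statement: it is quoted verbatim from Szollosi's paper \cite{Szo10} with a citation only, so there is no internal proof to compare yours against. On its own merits your argument is correct and is the standard one. The application of Jacobi's identity with $P=Q=\{1,\dots,l\}$ is set up properly: the top-left $l\times l$ minor of $M^{-1}=M^T$ is $\det(X^T)=\det X$, the complementary minor of $M$ on $\{l+1,\dots,n\}$ is $\det W$, and the sign $(-1)^{l(l+1)}$ is indeed $+1$, giving $\det X=\det M\cdot\det W=\pm\det W$. You are also right that the residual sign is absorbed by the paper's blanket convention that ``determinant'' means absolute value of the determinant (for a unitary rather than orthogonal $M$ the clean identity is $\det X=\det M\cdot\overline{\det W}$, which is the form Szollosi proves). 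Your Schur-complement alternative is likewise sound, and you correctly flag its only weakness, the case $\det X=0$, which would need a continuity or perturbation argument; the Jacobi route avoids this entirely and is the one to prefer.
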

Considering $C$ a skew conference matrix, $CC^T=(n-1) I$, and $H=C+I$ a skew Hadamard matrix, $HH^t=nI$, in the place of orthogonal matrix,
\begin{corollary}
Let $C=\left[\begin{array}{cc}
X & Y \\
Z & W
\end{array}\right]$ be a $(n+2) \times (n+2)$ skew conference matrix partitioned as above with $l\leq \frac{n}{2}+1$. Then the lower right $(n-l)\times (n-l)$, $l\geq 1$, minor of $C$ is
$$\det W=(n+1)^{\frac{n}{2}+1-l}\det X.$$
Taking $H=C+I$.Then the lower right $(n-l)\times (n-l)$, $l\geq 1$, minor of $C+I$ is
$$\det (W+I)=(n+2)^{\frac{n}{2}+1-l}\det (X+I).$$
 \end{corollary}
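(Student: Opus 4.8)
The plan is to deduce the corollary directly from Szollosi's theorem, using the elementary fact that a skew conference (respectively skew Hadamard) matrix is a positive scalar multiple of an orthogonal matrix. First I would normalise: since $C$ is an $(n+2)\times(n+2)$ conference matrix, $CC^T=(n+1)I$, and as $C$ is square this also gives $C^TC=(n+1)I$, so $\tilde C=\tfrac{1}{\sqrt{n+1}}\,C$ is orthogonal. The prescribed partition of $C$ induces the corresponding $(l,n+2-l)$-partition of $\tilde C$, with diagonal blocks $\tilde X=\tfrac{1}{\sqrt{n+1}}X$ of size $l\times l$ and $\tilde W=\tfrac{1}{\sqrt{n+1}}W$ of size $(n+2-l)\times(n+2-l)$.

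Next I would apply Szollosi's theorem to $\tilde C$, with its ambient order taken to be $n+2$; this yields $\det\tilde X=\det\tilde W$. Pulling the scalars back out, $\det\tilde X=(n+1)^{-l/2}\det X$ and $\det\tilde W=(n+1)^{-(n+2-l)/2}\det W$, so equating and solving for $\det W$ gives
$$\det W=(n+1)^{\frac{n+2-l}{2}-\frac{l}{2}}\det X=(n+1)^{\frac{n}{2}+1-l}\det X,$$
which is the first assertion. The hypothesis $l\le\frac{n}{2}+1$ is exactly the condition $l\le n+2-l$ ensuring that $X$ is the smaller block and that the exponent $\frac{n}{2}+1-l$ is nonnegative.

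For the second assertion I would repeat the argument verbatim with $H=C+I$. This is a skew Hadamard matrix of order $n+2$, so $HH^T=H^TH=(n+2)I$ and $\tilde H=\tfrac{1}{\sqrt{n+2}}H$ is orthogonal; its $(l,n+2-l)$-partition has diagonal blocks $X+I_l$ and $W+I_{n+2-l}$. Szollosi's theorem then relates $\det(X+I_l)$ and $\det(W+I_{n+2-l})$ through the same scalar bookkeeping, producing $\det(W+I)=(n+2)^{\frac{n}{2}+1-l}\det(X+I)$.

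I expect no genuine obstacle, since all the mathematical content is carried by Szollosi's theorem; the only care needed is in tracking the powers of $\sqrt{n+1}$ and $\sqrt{n+2}$, whose exponents differ between the two diagonal blocks precisely because those blocks have different sizes. Two minor points are worth flagging: the lower-right block is genuinely $(n+2-l)\times(n+2-l)$, which is the size for which the stated exponent comes out correctly, and, because every scaling factor is positive, the identities hold equally for the absolute values of the determinants used throughout the paper.
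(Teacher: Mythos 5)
Your proposal is correct and follows exactly the route the paper intends: the paper's entire "proof" is the remark preceding the corollary, namely to put the rescaled skew conference matrix $C/\sqrt{n+1}$ (respectively $H/\sqrt{n+2}$ with $H=C+I$) "in the place of orthogonal matrix" in Szollosi's theorem, which is precisely your normalization-and-scalar-bookkeeping argument. Your observation that the lower-right block is really $(n+2-l)\times(n+2-l)$ (so the stated size $(n-l)\times(n-l)$ is a typo consistent with the exponent $\frac{n}{2}+1-l$) is a correct and worthwhile clarification.
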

\begin{remark}
Let us point out that if $l=2$, the corollary above provides a skew matrix $C$ whose determinant is $(n+1)^{\frac{n}{2}-1}$ and
a $(-1,1)$-matrix of skew type, $C+I$, whose determinant is  $2(n+2)^{\frac{n}{2}-1}$. Therefore, the lower bounds for $f_k$ and $g_k$ are proved.
\end{remark}

When a $n\times n$ determinant is found that attains the relevant one of the above upper bounds, it is immediate that the maximal determinant for that order is just the bound itself. For instance, W. Orrick found $(-1,1)$-matrices of skew type whose determinants reach Ehilich-Wojtas' bound for $n=6, 14, 26$ and $42$. Therefore, by means of Lin's correspondence (Theorem \ref{maxdeterdosmundos}), $f_k(n)$ so does.
Nevertheless when the upper bound is not attained, finding $g_k(n)$ and $f_k(n)$ can be exceedingly difficult.  For $n\leq 30$, orders 18, 22, 30 are unresolved. The case $n=10$ was solved by Cameron, $g_k(10)=64000$ and $f_k(10)=33489$, using a random search and again Lin's correspondence was confirmed.

Two $(-1,1)$-matrices $M$ and $N$ are said to be {\it Hadamard equivalent} or {\it equivalent} if one can be obtained from the other by a sequence of the operations:
\begin{itemize}
\item interchange any pairs of rows and/or columns;
\item multiply any rows and/or columns through by $-1$.
\end{itemize}
In other words, we say that $M$ and $N$ are equivalent if there exist $(0,1,-1)$-monomial matrices $P$ and $Q$ such that $PMQ^T=N$.

 In the early 90s, a surprising link between homological algebra and Ha\-da\-mard matrices \cite{HD94} led to the study of cocyclic Hadamard matrices.
Hadamard matrices of many types are revealed to be (equivalent to) cocyclic matrices \cite{Hor07}. Among them, Sylvester Hadamard matrices, Williamson Hadamard matrices, Ito Hadamard matrices and Paley Hadamard matrices. Furthermore, the
     cocyclic construction is the most uniform construction technique for Hadamard matrices currently known, and
     cocyclic Hadamard matrices
    may consequently provide a uniform approach to the
famous Hadamard conjecture.

The main advantages of the cocyclic framework concerning Hadamard matrices may be summarized in the following facts:

\begin{itemize}

\item The  test to decide whether a cocyclic matrix is Hadamard runs in $O(t^2)$ time, better than the $O(t^3)$ algorithm for usual (not necessarily cocyclic)  matrices.

\item The search space is reduced to the set of cocyclic matrices over a given group (that is, $2^s$ matrices, provided that a basis for
cocycles over $G$ consists of $s$ generators), instead of the whole set of $2^{16t^2} $ matrices of order $4t$ with entries in $\{-1,1\}$.  \end{itemize}

Let us point out that there is some evidence that searching for cocyclic Hadamard matrices, and in particular for $D_{4t}$-cocyclic Hadamard matrices (i.e., cocyclic Hadamard matrices over the dihedral group of $4t$ elements), makes sense. Furthermore, in \cite{AAFG12}, we showed that  the cocyclic technique can  certainly be extended to handle the maximal determinant problem (for matrices of order $n$ with entries in $\{-1,1\}$) at least when $n\equiv 2 \mod 4$. More concretely, we focused on cocyclic matrices over the dihedral group $D_{2 t}$,  with $t$ odd. We provided some algorithms for constructing $D_{2t}$-cocyclic matrices with large determinants and some explicit calculations up to $t=19$.

In this paper, taking as a starting point the cocyclic matrices yielded by the algorithms from \cite{AAFG12}, we check which of those matrices are equivalent to a $(-1,1)$-matrix of skew type, $M$. In this way, we provide a method for constructing $(-1,1)$-matrices of skew type with large determinants. As far as we know, new records for $n=18$ and $22$ have been provided. Besides, we investigate the spectrum of the determinant function for these matrices $M$ and the relationship with the spectrum of the corresponding skew matrices $M-I$. In the light of these computations, we conjecture that when $\det M$ moves in the range $[ 2(n+2)^{\frac{1}{2}n-1},\,(2n-2)(n-2)^{\frac{1}{2}n-1}]$ then $\det (M-I)$ is a monotonic increasing function in the range $[(n+1)^{\frac{1}{2}n-1},\,(2n-3)(n-3)^{\frac{1}{2}n-1}]$. This weighs heavily in favor of  Lin's correspondence.

\section{Cocyclic matrices and $(-1,1)$-matrices with large determinants}
Assume throughout that $G=\{g_1=1,\,g_2,\ldots,g_{n}\}$ is a multiplicative group, not necessarily abelian. Functions
$\psi\colon G\times G\rightarrow \langle -1\rangle\cong {\bf Z}_2$ which satisfy
\begin{equation}\label{condiciondecociclo}
\psi(g_i,g_j)\psi(g_ig_j,g_k)=\psi(g_j,g_k)\psi(g_i,g_jg_k), \quad\forall g_i,g_j,g_k\in G
\end{equation}
are
called (binary) cocycles  (over $G$) \cite{McL95}. A cocycle is a  coboundary
$\partial\phi$ if it is derived from a set mapping $\phi\colon
G\rightarrow \langle -1\rangle$  by
$\partial\phi(a,b)=\phi(a)\phi(b)\phi(ab)^{-1}.$

A cocycle $\psi$ is naturally
displayed as {\it a cocyclic matrix (or $G$-matrix)} $M_\psi$; 
 that is, the entry in the $(i,j)$th position of the cocyclic matrix is $\psi(g_i,g_j)$, for all $1\leq i,j\leq n$.


A cocycle $\psi$ is {\it normalized} if $\psi(1,g_j)=\psi(g_i,1)=1$ for all $g_i,g_j \in G$. The cocyclic matrix coming from a normalized cocycle is called {\it normalized} as well.  Each unnormalized cocycle $\psi$ determines a normalized one $-\psi$, and vice versa. Therefore, we may reduce, without loss of generality, to the case of  normalized cocycles.


The set of cocycles forms an abelian group $Z(G)$ under pointwise multiplication, and the coboundaries form a subgroup $B(G)$. A basis $ {\bf B}$ for cocycles over $G$ consists of some elementary coboundaries $\partial_i$ and some representative cocycles, so that every cocyclic matrix admits a unique representation as a Hadamard (pointwise)  product $M=M_{\partial_{i_1}}\circ \ldots\circ M_{\partial_{i_w}}\circ R$, in terms of some coboundary matrices $M_{\partial_{i_j}}$ and a matrix $R$ formed from representative cocycles.

Recall that every {\em elementary coboundary} $\partial_d$ is constructed from the characteristic set map  $\delta_d\colon G\rightarrow  \{-1,1\}$ associated with an element $g_d\in G$, so that
$$\partial_d(g_i,g_j)=\delta_d(g_i)\delta_d(g_j)\delta_d(g_ig_j)\quad \mbox{for}\quad \delta_d(g_i)=\left\{\begin{array}{rr} -1 & g_d=g_i,\\
1 & g_d\neq g_i.
\end{array}\right.$$

\begin{lemma}[Lemma 1 \cite{AAFR08}]\label{notacob}

In particular, for $d\neq 1$, every row $s \notin \{ 1,d\}$ in $M_{\partial _d}$ contains precisely two $-1$s, which are located at the positions $(s,d)$ and $(s, e)$, for $g_e = g^{-1}_s g_d$. Furthermore, the first row is always formed by $1$s, while the $d$-th row is formed all by $-1$s, excepting the positions
$(d, 1)$ and $(d, d)$.\end{lemma}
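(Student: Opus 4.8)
The plan is to read off the $(s,j)$-entry of $M_{\partial_d}$ directly from the defining formula $\partial_d(g_s,g_j)=\delta_d(g_s)\delta_d(g_j)\delta_d(g_sg_j)$ and, for each fixed row index $s$, to determine the columns $j$ at which the product equals $-1$. Since each factor lies in $\{-1,1\}$, the product is $-1$ exactly when an odd number of the three factors is $-1$. The one fact driving everything is that $\delta_d(g_i)=-1$ precisely when $g_i=g_d$; hence the first, second and third factors take the value $-1$ exactly under the conditions $s=d$, $j=d$, and $g_sg_j=g_d$ respectively, the last of which reads $j=e$ with $g_e=g_s^{-1}g_d$. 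Thus the whole statement reduces to counting, row by row, how many of these conditions hold at each column.

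First I would dispose of the two distinguished rows. For $s=1$ we have $g_s=1$, so the second and third factors coincide ($\delta_d(g_sg_j)=\delta_d(g_j)$) and cancel, while the first factor is $\delta_d(1)=1$ because $d\neq 1$; hence every entry is $+1$, giving the all-ones first row. For $s=d$ the first factor is permanently $-1$, and $g_e=g_d^{-1}g_d=1$ collapses the third condition to $j=1$; a quick tally then shows that exactly one factor is $-1$ for every $j\notin\{1,d\}$ (product $-1$), whereas exactly two factors are $-1$ at $j=1$ and at $j=d$ (product $+1$), which is precisely the claimed pattern for the $d$-th row.

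The generic case $s\notin\{1,d\}$ is the heart of the argument. Here the first factor is permanently $+1$, so an entry is $-1$ iff exactly one of the two conditions $j=d$, $j=e$ holds. The delicate point is to check that these two columns are genuine and distinct: I would verify $e\neq 1$ (otherwise $g_s=g_d$, i.e.\ $s=d$) and $e\neq d$ (otherwise $g_s=1$, i.e.\ $s=1$), both excluded by hypothesis. With $d\neq e$ established, the two conditions are satisfied at two different columns and never simultaneously, so row $s$ carries exactly two $-1$'s, located at $(s,d)$ and $(s,e)$ with $g_e=g_s^{-1}g_d$, as asserted.

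The main obstacle is therefore not any deep fact but the bookkeeping in the generic row: one must rule out the degenerate coincidences $e=1$ and $e=d$ in order to guarantee the count is \emph{exactly} two rather than zero or one. It is also worth recording, to keep the count honest across all cases, that all three factors can never be $-1$ at once, since $s=d$ already forces $e=1$ and hence $j=1\neq d$; so throughout, ``an odd number of $-1$ factors'' in fact means ``exactly one.''
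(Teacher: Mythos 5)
Your argument is correct: it is the direct row-by-row verification from the defining formula $\partial_d(g_s,g_j)=\delta_d(g_s)\delta_d(g_j)\delta_d(g_sg_j)$, including the one point that actually needs checking (that $e\neq d$ for $s\notin\{1,d\}$, so the two $-1$'s land in distinct columns). The paper itself states this lemma without proof, citing \cite{AAFR08}, and your computation is exactly the standard justification.
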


Although the elementary coboundaries generate the set of all coboundaries, they might not be linearly independent (see \cite{AAFR09} for details).

Let $G_r(M)$ (resp. $G_c(M)$) be the Gram matrix of the rows (resp. columns) of $M$,
$$G_r(M)=MM^T,\quad (\mbox{resp.} \,G_c(M)=M^TM).$$
The Gram matrices of a cocyclic matrix can be calculated as follows.
\begin{proposition}[Lemma 6.6 \cite{Hor07}]\label{prop1} 
Let $M_\psi$ be a cocyclic matrix,
\begin{equation}\label{aatc}[G_r(M_\psi)]_{ij}=\psi(g_ig_j^{-1},g_j)\sum_{g\in G}\psi(g_ig_j^{-1},g),
\end{equation}
\begin{equation}
\label{atac}
[G_c(M_\psi)]_{ij}=\psi(g_i,g_i^{-1}g_j)\sum_{g\in G}\psi(g,g_i^{-1}g_j).
\end{equation}
\end{proposition}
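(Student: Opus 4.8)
The plan is to compute each Gram entry straight from the definition and then collapse the resulting product of two cocycle values into a single value by a well-chosen application of the cocycle identity \eqref{condiciondecociclo}. Since $[M_\psi]_{ij}=\psi(g_i,g_j)$, the row Gram matrix has entries
$$[G_r(M_\psi)]_{ij}=[M_\psi M_\psi^T]_{ij}=\sum_{g\in G}\psi(g_i,g)\,\psi(g_j,g),$$
while the column Gram matrix has entries $[G_c(M_\psi)]_{ij}=\sum_{g\in G}\psi(g,g_i)\,\psi(g,g_j)$. So the whole task reduces to rewriting each summand as a single cocycle value times a factor independent of $g$.

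For the rows I would set $a=g_ig_j^{-1}$, so that $g_i=ag_j$, and apply \eqref{condiciondecociclo} to the triple $(a,g_j,g)$, obtaining $\psi(a,g_j)\psi(ag_j,g)=\psi(g_j,g)\psi(a,g_jg)$. Solving for $\psi(g_i,g)=\psi(ag_j,g)$ and using that $\psi$ takes values in $\{-1,1\}$ (so that $\psi(a,g_j)^{-1}=\psi(a,g_j)$ and $\psi(g_j,g)^2=1$) yields the clean collapse
$$\psi(g_i,g)\,\psi(g_j,g)=\psi(a,g_j)\,\psi(a,g_jg).$$
Summing over $g$, pulling out the constant factor $\psi(a,g_j)$, and reindexing by the left translation $g\mapsto g_jg$ (a bijection of $G$) turns $\sum_g\psi(a,g_jg)$ into $\sum_{g\in G}\psi(a,g)$, which is exactly \eqref{aatc}.

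The column case is entirely parallel but absorbs the group element on the opposite side. I would put $b=g_i^{-1}g_j$, so $g_j=g_ib$, and apply \eqref{condiciondecociclo} to the triple $(g,g_i,b)$, namely $\psi(g,g_i)\psi(gg_i,b)=\psi(g_i,b)\psi(g,g_ib)$. After the same cancellations in $\{-1,1\}$ this gives $\psi(g,g_i)\psi(g,g_j)=\psi(g_i,b)\,\psi(gg_i,b)$; summing over $g$, factoring out $\psi(g_i,b)$, and reindexing by the right translation $g\mapsto gg_i$ then produces \eqref{atac}.

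There is no deep obstacle here, so the real work is bookkeeping. The one step I would check most carefully is matching the substitution to the correct triple in the cocycle identity, so that the two surviving factors share the same first argument (rows) or the same second argument (columns) and one of them does not involve the summation variable. The reason the rows factor through $g_ig_j^{-1}$ on the left while the columns factor through $g_i^{-1}g_j$ on the right is precisely the left-right asymmetry built into \eqref{condiciondecociclo}; interchanging these is the easiest way to land on an incorrect formula, and confirming the right choice is what makes the two displayed expressions fall out cleanly.
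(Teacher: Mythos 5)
Your proof is correct: the computation of the Gram entries from the definition, the collapse of $\psi(g_i,g)\psi(g_j,g)$ (resp.\ $\psi(g,g_i)\psi(g,g_j)$) via the cocycle identity applied to the triple $(g_ig_j^{-1},g_j,g)$ (resp.\ $(g,g_i,g_i^{-1}g_j)$), and the reindexing by left (resp.\ right) translation all check out. The paper gives no proof of its own, citing Lemma 6.6 of Horadam's book, and your argument is precisely the standard one behind that citation.
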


If a cocyclic matrix $M_\psi$ is Hadamard,  the cocycle involved, $\psi$, is said to be orthogonal and $M_\psi$ is {\it a cocyclic Hadamard matrix}. The cocyclic Hadamard test asserts that a normalized cocyclic matrix is Hadamard if and only if every  row sum (apart from the first) is zero \cite{Hor07}. In fact, this is a straightforward consequence of Proposition \ref{prop1}.

Analyzing this relation from a new perspective, one could think of normalized cocyclic matrices meeting Hadamard's bound  as normalized cocyclic matrices for which every row sum is zero. Could it be possible that such a relation translates somehow to the case $n\equiv 2 \mod 4$? We  proved in \cite{AAFG12} that the answer to this question is affirmative.

A natural way to measure if the rows of a normalized cocyclic matrix $M=[m_{ij}]$ are close to sum zero, is to define an {\it absolute row excess} function $RE$, such that
$$RE(M)=\sum_{i=2}^n \left| \sum_{j=1}^n m_{ij} \right|.$$
This is a natural extension of the usual notion of {\em excess} of a Hadamard matrix, $E(H)$, which consists in the summation of the entries of $H$.

With this definition at hand, it is evident that a cocyclic matrix $M$ is Hadamard if and only if $RE(M)=0$. That is, a cocyclic matrix $M$ meets Hadamard's bound if and only if $RE(M)$ is minimum. This condition may be generalized to the case $n \equiv 2 \mod 4$.

For the remainder of the paper $t$ denotes an odd positive integer.

\begin{proposition} [\cite{AAFG12}]\label{prophadi}
Let $M$ be a normalized cocyclic matrix over $G$ of order $n=2t$. Then
 $RE(M)\geq 2t-2$.
\end{proposition}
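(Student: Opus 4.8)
The plan is to translate the inequality into a statement about how many rows of $M=[m_{ij}]$ can have zero row sum, and then to control that number through the orthogonality structure of the Gram matrix. Write $R_i=\sum_{j=1}^n m_{ij}$ for the $i$-th row sum. Since $n=2t$ is even and the entries are $\pm1$, each $R_i$ is an even integer, so every row with $R_i\neq 0$ contributes at least $2$ to $RE(M)$. Because $M$ is normalized, its first row is constant $1$, so $R_1=n\neq 0$. Hence, setting $Z=\{g_i:\ R_i=0\}$ (a subset of $\{g_2,\dots,g_n\}$), we get
$$RE(M)=\sum_{i=2}^n|R_i|\ \ge\ 2\bigl((n-1)-|Z|\bigr)=2\bigl((2t-1)-|Z|\bigr).$$
Thus it suffices to prove that at most $t$ rows have zero sum, i.e. $|Z|\le t$: this already yields $RE(M)\ge 2(t-1)=2t-2$.

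To bound $|Z|$, first I would read off from Proposition \ref{prop1} that
$$[MM^T]_{ij}=\psi(g_ig_j^{-1},g_j)\sum_{g\in G}\psi(g_ig_j^{-1},g)=\psi(g_ig_j^{-1},g_j)\,R_{g_ig_j^{-1}},$$
where $R_{g_ig_j^{-1}}$ denotes the sum of the row indexed by $g_ig_j^{-1}$. Since $\psi$ takes values $\pm1$, this gives the clean equivalence $[MM^T]_{ij}=0\iff g_ig_j^{-1}\in Z$; in other words, rows $i$ and $j$ of $M$ are orthogonal exactly when $g_ig_j^{-1}\in Z$ (and symmetry of $MM^T$ forces $Z=Z^{-1}$). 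The decisive input is the classical fact that, because $n\equiv 2\bmod 4$, there are no three pairwise orthogonal $\pm1$ vectors in $\mathbb{R}^n$: normalizing one of them to the all-ones vector, the other two must each have exactly $n/2$ entries equal to $+1$, and their orthogonality forces $n/2$ to be even, i.e. $4\mid n$, a contradiction. Consequently no three rows of $M$ can be pairwise orthogonal.

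I would then phrase this conclusion graph-theoretically. Let $\Gamma$ be the graph on the $n$ rows with an edge $\{i,j\}$ whenever rows $i,j$ are orthogonal, equivalently whenever $g_ig_j^{-1}\in Z$. By the previous paragraph $\Gamma$ is triangle-free. For each fixed $i$ the number of $j\neq i$ with $g_ig_j^{-1}\in Z$ is exactly $|Z|$ (the map $z\mapsto z^{-1}g_i$ is a bijection from $Z$ onto the neighbours of $g_i$, with $z\neq 1$ ensuring $j\neq i$), so $\Gamma$ is $|Z|$-regular and has $n|Z|/2$ edges. Mantel's theorem bounds the number of edges of a triangle-free graph on $n$ vertices by $n^2/4$, whence $n|Z|/2\le n^2/4$ and therefore $|Z|\le n/2=t$, completing the argument.

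I expect the main obstacle to be spotting the reduction itself: the statement looks like a purely combinatorial parity estimate, and the useful leverage only appears once one recognizes that the zero entries of $MM^T$ encode a regular (Cayley) graph and that the order congruence $n\equiv 2\bmod 4$ makes that graph triangle-free. After that, the bound is immediate from Mantel's theorem. A minor point to verify carefully is that $1\notin Z$ (so that $\Gamma$ is loopless and $R_1$ is not miscounted), which is precisely where normalization of the cocycle is used.
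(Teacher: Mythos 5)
Your argument is correct. Note that the paper itself does not prove this proposition --- it is quoted from \cite{AAFG12} --- so there is no in-text proof to compare against; but your reduction (each row sum is even, hence each nonzero row sum contributes at least $2$, so it suffices to show at most $t$ rows sum to zero) combined with the Gram-matrix formula of Proposition \ref{prop1} (which makes the orthogonality graph a $|Z|$-regular Cayley graph) and the classical fact that no three $\pm1$ vectors in $\mathbb{R}^n$ are pairwise orthogonal when $n\equiv 2 \bmod 4$ is exactly the mechanism behind the cited result; the original argument reaches the same contradiction by a pigeonhole count on $Z$ and $Z^{-1}g_i$ rather than via Mantel's theorem, but this is only a cosmetic difference. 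You also correctly isolate where the two hypotheses enter: normalization gives $1\notin Z$ (no loops, $R_1=n$), and $t$ odd gives the triangle-freeness.
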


But we may go even further. Having the minimum possible value $2t-2$ is a necessary condition for a cocyclic matrix $M$ to meet the bound $(\ref{ewb1})$.

\begin{proposition}[\cite{AAFG12}] \label{propvic}
If a cocyclic matrix $M$ of order $n=2t$ meets the bound (\ref{ewb1}), then $RE(M)=2t-2$.
\end{proposition}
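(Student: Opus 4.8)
The plan is to read the multiset of row sums of $M$ directly off its row Gram matrix $G_r(M)=MM^T$, exploiting the fact that for a cocyclic matrix the absolute value of every Gram entry coincides with the absolute value of some row sum. Write $R(x)=\sum_{g\in G}\psi(x,g)$ for $x\in G$. Since the $i$-th row of $M_\psi$ is $(\psi(g_i,g_j))_j$, the $i$-th row sum is exactly $r_i=R(g_i)$, and as $x$ runs over $G$ the number $R(x)$ runs over the full list $r_1,\dots,r_n$ of row sums.

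First I would invoke Proposition \ref{prop1}, which gives $[G_r(M)]_{ij}=\psi(g_ig_j^{-1},g_j)\,R(g_ig_j^{-1})$ and hence $|[G_r(M)]_{ij}|=|R(g_ig_j^{-1})|$. The key observation is that for each fixed $i$, as $j$ ranges over $\{1,\dots,n\}$ the element $g_ig_j^{-1}$ ranges bijectively over all of $G$; therefore the multiset $\{\,|[G_r(M)]_{ij}|:1\le j\le n\,\}$ equals $\{\,|R(x)|:x\in G\,\}=\{|r_1|,\dots,|r_n|\}$. Thus every row of the entrywise-absolute-value matrix $[\,|[G_r(M)]_{ij}|\,]$ is a permutation of the list of absolute row sums, so the multiset of all $n^2$ numbers $|[G_r(M)]_{ij}|$ is precisely $n$ disjoint copies of $\{|r_1|,\dots,|r_n|\}$.

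Next I would bring in the hypothesis that $M$ meets the bound (\ref{ewb1}). The equality case of the Ehlich--Wojtas theorem forces $MM^T$ to agree, up to a simultaneous signed permutation of its indices, with $\operatorname{diag}(L,L)$ where $L=(n-2)I_t+2J_t$; since a signed permutation only permutes positions and flips signs, it preserves the multiset of absolute values of entries. Hence the multiset of the $n^2$ numbers $|[G_r(M)]_{ij}|$ is that of $\operatorname{diag}(L,L)$: namely $n$ copies of $n$ (the diagonal), $2t(t-1)$ copies of $2$ (the nonzero within-block off-diagonal entries), and $2t^2$ copies of $0$. Comparing with the previous paragraph and dividing every multiplicity by $n=2t$, I conclude that among $|r_1|,\dots,|r_n|$ exactly one equals $n$, exactly $t-1$ equal $2$, and exactly $t$ equal $0$. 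The unique row sum of modulus $n$ is $r_1=n$ (the normalized first row), so $RE(M)=\sum_{i=2}^n|r_i|=2(t-1)=2t-2$, which also realizes Proposition \ref{prophadi} with equality.

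The step I expect to be delicate is the justification that meeting the determinant bound pins down the multiset of $|[G_r(M)]_{ij}|$: one must appeal to the equality case of Ehlich--Wojtas, asserting that the extremal Gram matrix is $\operatorname{diag}(L,L)$ up to signed permutation, rather than merely that such a matrix exists. If one prefers not to quote the full equality structure, the same conclusion follows more cheaply from the extremal spectrum of $MM^T$ (eigenvalue $2n-2$ with multiplicity $2$ and $n-2$ with multiplicity $n-2$): this gives $\sum_{i,j}[G_r(M)]_{ij}^2=\operatorname{tr}((MM^T)^2)=2(2n-2)^2+(n-2)^3$, while the cocyclic identity of the second paragraph gives $\sum_{i,j}[G_r(M)]_{ij}^2=n\sum_k r_k^2$; combined with $|r_k|\in\{0,2,n\}$ and $r_1=n$ this again forces $t-1$ row sums of modulus $2$ and hence $RE(M)=2t-2$.
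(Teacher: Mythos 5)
Your proof is correct, and it is essentially the argument of the cited source: the paper itself only quotes this proposition from \cite{AAFG12} without reproducing the proof, but the intended argument is exactly the one you give, combining Proposition \ref{prop1} (each row of $|G_r(M)|$ is a permutation of the absolute row sums) with the Ehlich--Wojtas equality structure $MM^T\sim\operatorname{diag}(L,L)$ and counting entries. Your closing remark correctly identifies the only delicate point (that one must invoke the equality case, not just the bound), and your implicit reduction to normalized $M$ is consistent with the paper's conventions.
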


Unfortunately, although having minimum absolute row excess is a necessary and sufficient condition for meeting  Hadamard's bound , it is just a necessary (but not sufficient, in general, see \cite[Table 5]{AAFG12}
) condition for meeting the bound (\ref{ewb1}). But there are some empirical evidences that matrices having minimum absolute row excess correspond with matrices having large determinants, see \cite[Table 2.1., page 11]{AAFG12b}.

From now on, we fix $G={ D}_{2t}$ as the dihedral group with presentation $\langle a,b\colon \,a^t=b^2=(ab)^2=1\rangle$, with ordering $\{1,a,\ldots,a^{t-1},b,ab,\ldots,a^{t-1}b\}$  and indexed as $\{1,\ldots,2t\}$ for $t$ an odd positive integer.
  A basis for cocycles over ${ D}_{2t}$ consists in (see \cite{AAFG12,AAFR08}):

   $${\bf B}=\{ \partial _2, \ldots , \partial_{2m-1},\beta\}.$$

  ere $\partial_i$ denotes the coboundary associated with the $i^{th}$-element of the dihedral group ${ D}_{2t}$, that is $a^{i-1 \, (\mbox{\tiny mod} \, t)}b^{\lfloor \frac{i-1}{t}\rfloor}$. And $\beta$  is the representative cocycle in cohomology, i.e. the cocyclic matrix coming from inflation is  $M_{\beta}=\left[ \begin{array}{rr} 1 & 1 \\ 1 & - \end{array}\right]\otimes J_t$.
  We use $A\otimes B$ for denoting the usual Kronecker product of matrices, that is, the block matrix whose blocks are $a_{ij}B$.

  Summarizing, the  $D_{2t}$-matrices of the form $M=M_{{\partial}_{i_1}}\circ \ldots\circ M_{{\partial}_{i_w}}\circ M_{{\beta}}$ with $RE(M)=2t-2$ yield a potential  source of matrices with large determinant. In \cite{AAFG12,AAFG12b} we have performed exhaustive searches for the set of $D_{2t}$-matrices with large determinant, for $3\leq t\leq 11$. In the sequel, we will use these matrices  for our purpose of looking for matrices of skew type with large determinants.

\section{Finding equivalent matrices of skew type}


We perform a backtracking search to decide whether or not a $(-1,1)$-matrix  $M$ of size $n$ (given as input) is equivalent to a matrix of skew type. If so,  a skew-matrix $K$ equivalent to $M$ is provided. Starting from the $1\times 1$ matrix, $[1]$, the candidate matrix is built up by ``skew-symmetrically'' appending one row and column at a time, until size $n$ is reached or no continuation is possible. At this stage, the algorithm returns to the most recent sub-matrix from which there is a possible continuation that has not yet been tried, and resumes the search from there. Proceeding exhaustively in this manner, until either the candidate matrix has size $n$ or the entire search tree has been explored, the algorithm terminates stating whether or not $M$ is equivalent to a skew type. If so, providing $K$.

Naturally, the search tree is vast, and various methods must be used to prune it. The value $(n!)^2$ bounds the space we need to explore (in the worst case). Proposition \ref{rowopsk} provides a criterion for removing branches of the search tree reducing it to $n!$. In our observation, most of the branches of the search tree are removed at an early stage. So we can claim that our algorithm works efficiently for the values of $n$ that we have worked on in this manuscript. A deeper study of the time-complexity of this algorithm and efforts to try to improve it are underway.

\begin{proposition}\label{rowopsk}
Let $M$ be a $(-1,1)$-matrix and $K$ be a $(-1,1)$-matrix of skew type. If $M$ and $K$ are equivalent then a $(-1,1,0)$-monomial matrix $Q$ exists such that  $Q^T K Q$ can be obtained from $M$ by a sequence of row operations (interchanges and negations).
\end{proposition}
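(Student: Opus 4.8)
The plan is to unwind the definition of Hadamard equivalence and track how the monomial matrices act, using the skew-symmetry of $K$ to eliminate the column-operation freedom. By hypothesis, $M$ and $K$ are equivalent, so there exist $(0,1,-1)$-monomial matrices $P$ and $Q$ with $PMQ^T=K$, hence $M=P^{-1}KQ^{-T}$. The goal is to rewrite this as $M=S\,(Q^T K Q)$ for some \emph{signed permutation} $S$ (equivalently, a product of row interchanges and negations), since left-multiplication by such an $S$ is exactly a sequence of row operations. First I would solve for $K$ in terms of $M$ and isolate the asymmetry between the left and right factors: writing $PMQ^T=K$ and taking transposes gives $Q M^T P^T = K^T = -K = -PMQ^T$.

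Next I would exploit that identity. From $QM^TP^T=-PMQ^T$ one obtains a relation of the form $(P^{-1}Q)\,M^T\,(P^{-1}Q)^{T}=-M$ after suitable multiplication on the left by $P^{-1}$ and on the right by $Q^{-T}$; the point is that the single monomial matrix $R:=P^{-1}Q$ conjugates $M$ (up to the transpose and a sign) to itself. The idea is that this forces $P$ and $Q$ to differ only by a symmetry that is compatible with the skew structure, so that the two-sided monomial action $P\cdot Q^T$ can be renormalized into the symmetric sandwich $Q^T(\,\cdot\,)Q$ together with a leftover purely-row factor $S=P Q^{-T}$ (or its inverse). Concretely, set $Q$ to be the right factor already present; then $Q^TKQ = Q^T(PMQ^T)Q$, and I would show $Q^TP$ is a signed permutation matrix, so that $Q^TKQ = (Q^TP)\,M\,(Q^TQ)$ collapses — using that for a monomial $Q$ the product $Q^TQ$ is diagonal with $\pm1$ entries — to a signed permutation times $M$, i.e. $Q^TKQ=S\,M$ with $S$ a signed permutation. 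Equivalently $M=S^{-1}(Q^TKQ)$, which is the desired conclusion that $Q^TKQ$ is obtained from $M$ by row interchanges and negations.

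I expect the main obstacle to be the bookkeeping that turns the \emph{a priori} independent pair $(P,Q)$ into a single monomial matrix realizing a symmetric congruence. The subtlety is that Hadamard equivalence allows unrelated row and column monomial matrices, whereas the statement demands a congruence $Q^TKQ$ by \emph{one} monomial matrix on both sides; the extra degrees of freedom must be absorbed into the row-only factor $S$. The key leverage is skew-symmetry of $K$: taking transposes converts $K$ into $-K$ and thereby links $P$ and $Q$, which is exactly what is needed to show $Q^TP$ (and $Q^TQ$) are again signed permutations rather than arbitrary monomial matrices. I would finish by verifying that $S=Q^TP\,(Q^TQ)^{-1}$ is indeed a $(0,1,-1)$-monomial matrix with exactly one nonzero entry of modulus $1$ per row and column, so that left-multiplication by $S$ decomposes into the permitted row interchanges and negations, completing the reduction of the search space from $(n!)^2$ to $n!$.
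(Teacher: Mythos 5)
Your proposal is correct and, once the detours are stripped away, it is essentially the paper's own proof: the entire content is the identity $Q^TKQ=Q^T(PMQ^T)Q=(Q^TP)M$, where $Q^TQ=I$ exactly (the nonzero entries of a $(0,1,-1)$-monomial matrix square to $1$, so $Q^TQ$ is the identity, not merely a $\pm1$ diagonal --- and this is precisely what your ``collapse'' requires, since a genuine $\pm1$ diagonal on the right would be a column operation, not a row operation), and $Q^TP$ is a signed permutation simply because monomial matrices form a group under multiplication. The digression through transposes and the skew-symmetry of $K$ is unnecessary and slightly misleading: skewness plays no role in making $Q^TP$ or $Q^TQ$ well behaved, and is only used afterwards to observe that the congruence $Q^TKQ$ is again of skew type.
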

 \begin{proof}
If $K$ and $M$ are equivalent then there exist $(-1,1,0)$-monomial matrices $P$ and $Q$ such that $PMQ^T=K$. If $K$ is of skew type then
$Q^T K Q$ is so. Let us observe that $ Q^T K Q= Q^TP M$ and $Q^TP$ is a $(-1,1,0)$-monomial matrix.
  \end{proof}

\vspace{2mm}

\noindent{ \bf Algorithm.}  Search for  matrices of skew type equivalent to $M$.

\vspace{2mm}

\noindent{Input: a $(-1,1)$-matrix $M$ of order $n$.}
\newline
\noindent{Output: a   matrix $K$ of skew type equivalent to $M$, if such a matrix $K$   exists. }

\vspace{3mm}
 $r_i$ denotes the $i$-th row of $M$ for $1\leq i\leq n$.
\vspace{1mm}
\begin{itemize}
\item[1.] Initialize variables.
\newline
$\begin{array}{ll}
s=1 & \quad \mbox{order of the current sub-matrix}\\[2mm]
K_1^0=[1] & \quad\mbox{the initial sub-matrix}\\[2mm]
F_1^0=\{r_1\} & \quad\mbox{a list of rows of $M$ or $-M$ which take part in the sub-matrix}\\[2mm]
\bar{F}_1^0=\{r_2,\ldots,r_n\} & \quad\mbox{a list of rows of $M$  which do not belong to $F_1^0$ }
\end{array}$
\item[2.] Iteration $l+1$

$\begin{array}{l}
K_s^l=[r^l_{ij}]_{1\leq i,j\leq s}\\[2mm]
F_s^l=\{r_1^l,r_2^l,\ldots,r_s^l\}\\[2mm]
\bar{F}_s^l=\{r^l_{i_1},\ldots,r^l_{i_k}\}\subset \{r_{s+1}^l,\ldots,r_n^l\}
\end{array}$

\begin{itemize}

\item[2.1.] If $s=n$ then the Output is $K=K_s$ and EndAlgorithm.

\item[2.2.] If $0\leq s<n$ then
   \begin{itemize}
   \item[2.2.1.] If $\bar{F}_{s}^l$ is not empty (i.e., $k>0$) then,
   $$\begin{array}{l}
   r_{i_1}^{l+1}=\left\{\begin{array}{cl}
                          r_{i_1}^l & \mbox{if}\,r_{i_1\;s+1}=1\\
                           -r_{i_1}^l & \mbox{if}\,r_{i_1\;s+1}=-1\end{array}\right.   \end{array}$$

            \begin{itemize}
            \item[2.2.1.1.] If \begin{equation}\label{skewp}
            r^{l+1}_{i_1,j}=-r^l_{j,s+1}\, \forall j=1,\ldots, s;\end{equation}
            then
            $$                                 K_{s+1}^{l+1}=\left[\begin{array}{cc}
                     K_{s}^{l} & \begin{array}{c} r^l_{1,s+1}\\ \vdots \\ r^l_{s,s+1}\end{array}\\
                     r^{l+1}_{i_1,1}\;\ldots\;r^{l+1}_{i_1,s} & 1
            \end{array}\right]$$
            $\begin{array}{l}
            r_j^{l+1}=r^l_j \,\forall 1\leq j\leq n\,\,\mbox{and}\,\,j\neq s+1\\
            r_{s+1}^{l+1}=r_{i_1}^{l+1}\\
                        F_{s+1}^{l+1}=\{r_1^{l+1},\ldots,r_{s+1}^{l+1}\}\\            \bar{F}_{s+1}^{l+1}=\{r_{s+2}^{l+1},\ldots,r_n^{l+1}\}\\
                        \bar{F}_s^{l+1}=\bar{F}_s^l,\bar{F}_{s-1}^{l+1}=\bar{F}_{s-1}^l,\ldots, \bar{F}_1^{l+1}=\bar{F}_1^l\\
                        s=s+1,\,l=l+1 \,\,\mbox{ and go to 2.}
                        \end{array}$
             \item[2.2.1.2.] If identity (\ref{skewp}) does not hold then

             $\begin{array}{l}
             F_s^{l+1}=F_s^{l}\\
             \bar{F}_s^{l+1}=\bar{F}_s^{l}\setminus \{r^l_{i_1}\}\\
             i_h=i_{h+1},\,\,1\leq h\leq k-1\\
             r_{i}^{l+1}=r_{i}^l,\,\, 1\leq i\leq n\\
             K_s^{l+1}=K_s^{l}\\
             l=l+1 \,\,\mbox{and go to 2.2.}
             \end{array}$
            \end{itemize}

     \item[2.2.2.] If $\bar{F}_{s}^l$ is  empty (i.e., $k=0$) then,
       \begin{itemize}
       \item[2.2.2.1] If $s>0$ then

     $\begin{array}{l}
          F_{s-1}^{l+1}=F_s^l\setminus \{r_s^l\}\\
          r_i^{l+1}=r_i^l,\,\,1\leq i\leq n\\
     K_{s-1}^{l+1}=[r_{i,j}^{l+1}]_{1\leq i,j\leq s-1}\\
     \bar{F}_{s-1}^{l+1}=\bar{F}_{s-1}^l\setminus \{r_s^l\}\\
                        s=s-1,\,l=l+1 \,\,\mbox{ and go to 2.}
     \end{array}$
       \item[2.2.2.2.] If $s=0$ then, EndAlgorithm and $M$ is not equivalent to a matrix $K$ of skew type.
       \end{itemize}

   \end{itemize}

\end{itemize}

\end{itemize}

The following result states an independency of the representative  of skew type chosen in the same equivalence class with respect to the function $h(K)=\det(K-I)$. It will play an essential role to study {\it Lin's correspondence} in the next section.

\begin{proposition}
Let $K_1$ and $K_2$ be $(-1,1)$-matrices of skew type. If $K_1$ and $K_2$ are equivalent then $\det (K_1-I)=\det (K_2-I).$
\end{proposition}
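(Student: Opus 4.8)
The plan is to reduce Hadamard equivalence of two skew-type matrices to a signed-permutation \emph{congruence}, under which $h(K)=\det(K-I)$ is manifestly invariant, and then to absorb the residual row operations produced by Proposition \ref{rowopsk}. Throughout write $N_i=K_i-I$; since $k_{ii}=1$ and $k_{ij}=-k_{ji}$, each $N_i$ is skew-symmetric, and conversely $K_i=N_i+I$. If $n$ is odd then $N_1,N_2$ are odd skew-symmetric matrices, so $\det N_1=\det N_2=0$ and there is nothing to prove; hence I assume $n$ even.

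The first ingredient is that $h$ is invariant under conjugation by a $(-1,0,1)$-monomial (signed permutation) matrix $Q$. Indeed $Q^TQ=I$ and $\det Q=\pm1$, so $Q^TKQ$ is again of skew type, because $(Q^TKQ)^T=Q^TK^TQ=Q^T(2I-K)Q=2I-Q^TKQ$, while $Q^TKQ-I=Q^T(K-I)Q$ gives $\det(Q^TKQ-I)=(\det Q)^2\det(K-I)=\det(K-I)$. Thus $h(Q^TKQ)=h(K)$ for every skew-type $K$ and every signed permutation $Q$; equivalently, $\mathrm{Pf}(Q^T(K-I)Q)=\det(Q)\,\mathrm{Pf}(K-I)$ differs only by a sign.

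Next I apply Proposition \ref{rowopsk} with $M=K_1$ and $K=K_2$ (both are $(-1,1)$-matrices, and $K_2$ is of skew type): there is a signed permutation $Q$ such that $S:=Q^TK_2Q$ is obtained from $K_1$ by a sequence of row interchanges and row negations, that is $S=RK_1$ for some signed permutation $R$. By the previous paragraph $h(K_2)=h(S)$, and $S$ is of skew type, so the statement reduces to proving $h(S)=h(K_1)$, where now $S=RK_1$ with both $S$ and $K_1$ of skew type and $R$ a pure \emph{row}-operation matrix.

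The hard part is this last step, because $S=RK_1$ is an a priori one-sided transformation, not a congruence. The idea is that the skew constraints rigidify $R$. Writing $R_{i,\sigma(i)}=\epsilon_i$, one has $S_{ij}=\epsilon_i(K_1)_{\sigma(i),j}$; the diagonal condition $S_{ii}=1$ forces $\epsilon_i=(K_1)_{\sigma(i),i}$, and the skew condition $S_{ij}=-S_{ji}$ then imposes $(K_1)_{\sigma(i),i}(K_1)_{\sigma(i),j}=-(K_1)_{\sigma(j),j}(K_1)_{\sigma(j),i}$ for all $i\neq j$. I would analyse these identities (for instance specialising $j=\sigma(i)$) to show that $\sigma$ splits into compatible blocks and that the signs pair up, so that $R$ in fact implements a signed-permutation congruence, i.e.\ $S=U^TK_1U$ for a signed permutation $U$; then $h(S)=h(K_1)$ by the first ingredient and the proof is complete. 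Equivalently, one shows directly that $\mathrm{Pf}(S-I)=\pm\,\mathrm{Pf}(K_1-I)$, using the trivial identity $\det(RK_1-I)=\det(K_1R-I)$ together with the constraints above. I expect establishing this rigidity of $R$ — upgrading the one-sided relation $S=RK_1$ between two skew-type matrices to a genuine congruence — to be the main obstacle, since it is the only place where the interplay between the diagonal normalisation and the skew-symmetry is essential.
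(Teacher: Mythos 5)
Your setup is sound and is essentially the paper's own reduction: Proposition \ref{rowopsk} applied with $M=K_1$ and $K=K_2$ gives a signed permutation $Q$ with $S:=Q^TK_2Q=RK_1$ for some signed permutation $R$, congruence invariance gives $\det(K_2-I)=\det(S-I)$, and everything comes down to showing $\det(RK_1-I)=\det(K_1-I)$ when $K_1$ and $RK_1$ are both of skew type. But at precisely that point the proposal stops being a proof. The ``rigidity'' claim --- that the one-sided relation $S=RK_1$ between two skew-type matrices can be upgraded to a congruence $S=U^TK_1U$ --- is only announced (``I would analyse these identities\dots'', ``I expect establishing this rigidity\dots to be the main obstacle''); the constraints you write down ($\epsilon_i=(K_1)_{\sigma(i),i}$ and the pairing identities) are correct, but you never extract the conclusion from them. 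Since this claim is the entire content of the proposition, what you have is a plan whose central lemma is missing.

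Your instinct that this step is delicate is, however, well founded. The paper disposes of it by asserting that if $RK$ and $K$ are both of skew type then $R=I$, and that assertion is not valid in general: take $K=\left[\begin{smallmatrix}1&1&-1&1\\ -1&1&1&1\\ 1&-1&1&1\\ -1&-1&-1&1\end{smallmatrix}\right]$ and let $R$ replace the rows $(r_1,r_2,r_3,r_4)$ of $K$ by $(-r_2,r_1,-r_4,r_3)$; then $RK$ is again of skew type although $R\neq I$. (In this example $RK$ does turn out to be a signed-permutation congruence of $K$, with underlying permutation $(12)(34)$ and signs $(1,1,-1,-1)$, so your weaker target statement survives here; but one example is not a proof, and the skew-type constraints do admit nontrivial solutions --- e.g.\ with $\sigma$ a fixed-point-free involution and matched signs --- so the analysis you defer is genuinely nonempty.) To finish you would need either a general proof that the constraints force $R$ to implement a congruence, or a direct determinant computation, for instance starting from the identity $RK-I=(K+R-2I)R^T$, which the two skew-type conditions imply. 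As it stands, the key step is correctly identified but not carried out.
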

\begin{proof}
If $K_1$ and $K_2$ are equivalent then there exist $(-1,1,0)$-monomial matrices $P$ and $Q$ such that  $ Q^T K_1 Q= Q^TP K_2$ (see the proof of Proposition \ref{rowopsk}). Let us point out that $Q^T K_1 Q$ is of skew type.   Due to the fact that $Q^TP K_2$ and $K_2$ are of skew type, it follows  $Q^TP=I$ and hence $K_2=Q^T K_1 Q$. Now, by a simple inspection, we have
$K_2-I=Q^T (K_1-I) Q$ and from here, the desired result.
\end{proof}

 \section{Explicit calculations}
The problem of the spectrum (or range) of the determinant function was studied by Craigen \cite{Cra90} who asked for the complete list of integers, $d$, such that
$d$ is the determinant of some $(0,1)$-matrix of size $n-1$, or equivalently, $2^{n-1} d$ is the determinant of some $(-1,1)$-matrix of size $n$. We are focusing here on $D_{2t}$-matrices.

 We have performed an exhaustive search  to determine the complete range of the determinant function for  $D_{2t}$-matrices, for $3\leq t\leq 11$ odd. For each value of the range $2^{2t-1}d$, we have checked if there is a $(-1,1)$-matrix of skew type $K$ equivalent to $M$, a $D_{2t}$-matrix with determinant equal to $2^{2t-1}d$. If so, we have computed $\det (K-I)$. In Table \ref{tab-01}, $R_M$ and $R_K$ denote the ratios $\displaystyle \frac{\det M}{(2n-2)(n-2)^{\frac{1}{2}n-1}}$ and $\displaystyle \frac{\det (K-I)}{(2n-3)(n-3)^{\frac{1}{2}n-1}}$, respectively.
   For convenience, we will treat  only the values $2^{n-1} d$ in $[2(n+2)^{\frac{1}{2}n-1},(2n-2)(n-2)^{\frac{1}{2}n-1}]$  in this discussion.


\begin{table}
\def~{\phantom0}
\catcode`\@=13
\def@{\phantom.}
\caption{Range for $D_n$-matrices with $n=6, 10, 14, 18$ and $22$ in the interval $[2(n+2)^{\frac{1}{2}n-1},(2n-2)(n-2)^{\frac{1}{2}n-1}]$.\label{tab-01}}
\medskip
\begin{center}
\begin{tabular}{l|ccccc}
\hline
\multicolumn1l{\it  }  & $\det(M)/2^{n-1}$ & $R_M$ & Skew &  $\det(K-I)$ & $R_K$ \\
\hline
\hline
\multicolumn1l {\it n=6}  \\\hline
2&5& 1&\mbox{Yes}&81 & 1\\
1&4&0.8 &\mbox{Yes}&49 &0.605 \\\hline
\multicolumn1l{\it n=10}   \\
\hline
6&144& 1&\mbox{No}& & \\
5&125& 0.868&\mbox{Yes}&33489 & 0.82\\
4&81&0.563 &\mbox{Yes}&14641 & 0.359 \\\hline
\multicolumn1l{\it n=14}   \\
\hline
15 & 9477  & 1 & \mbox{Yes} & 44289025 &  1\\
14 & 8405  & 0.887 & \mbox{Yes} & 38155329 &  0.862\\
13 & 7569  & 0.799 & \mbox{No} &  &  \\
12 & 4096  & 0.432 & \mbox{Yes} & 11390625 &  0.257\\ \hline

\multicolumn1l{\it n=18}   \\
\hline
74&1114112& 1&\mbox{No}& &\\
73&1003520&0.901&\mbox{No}& &\\
72&998001&0.896 &\mbox{No}& &\\
71&950480& 0.853&\mbox{Yes}&70084620225&0.829\\
70&912925&0.819 &\mbox{Yes}&66721473025&0.789\\
69&842724&0.756 &\mbox{No}& &\\
68&812500&0.729 &\mbox{Yes}&57631204225&0.681\\
67&426320&0.383 &\mbox{Yes}&28067976225&0.332\\
66&411892&0.370 &\mbox{Yes}&27048736225&0.320\\65&390625&0.351 &\mbox{Yes}&16983563041&0.201\\
\hline

\multicolumn1l{\it n=22}   \\
\hline
195& 184769649 &0.901&\mbox{No}& &\\
194&179802493  &0.877   &\mbox{Yes}&216409254831025 &0.861\\
193& 173102177  & 0.844&\mbox{No}& &\\
192&164795405& 0.804&\mbox{Yes}& 195146846433009& 0.776\\
191&158835609&0.775 &\mbox{No}& & \\
190&149309173&0.728 &\mbox{Yes}&173517785938225 &0.690\\
189&109098025&0.532 &\mbox{Yes}&125239219607089 & 0.498\\
188&97726205&0.477 &\mbox{Yes}& 110427668520849&0.439 \\
187&95262037&0.465 &\mbox{Yes}&107496564483025&0.428\\
186&90792257&0.443 &\mbox{No}& &\\
185& 90269001 &0.440&\mbox{No}& &\\
184&80120045 &  0.391 &\mbox{Yes}&87838151584401 &0.349\\
183& 74701449 &0.364 &\mbox{No}& &\\
182&72999936& 0.356&\mbox{No}& &\\
181&71233553&0.347&\mbox{No}& &\\
180&70725605&0.345 &\mbox{Yes}&75040101679761 &0.299\\
179&69900605&0.341 &\mbox{Yes}&74857259736081&0.298\\
178&68865749&0.336 &\mbox{No}& &\\
177&68204153&0.333 &\mbox{No}& &\\
176&66810757&0.326 &\mbox{Yes}&69917274339025&0.278 \\
175&62693405&0.306 &\mbox{Yes}&64974062301201&0.258 \\
174&60466176&0.295 &\mbox{Yes}&41426511213649& 0.165\\
\hline

\end{tabular}
\end{center}
\end{table}

Looking at Table \ref{tab-01}, we observe
a regularity in the growth of the functions $\det K$ and
$\det (K-I)$. As a consequence, the largest value of $\det(K-I)$ corresponds to $K$ where the largest value of $\det(K)$ is reached and vice versa. This weighs heavily in favor of Lin's correspondence. For values of the determinant,  $2^{n-1} d$, lesser than $2(n+2)^{\frac{1}{2}n-1}$ no regularity is observed.

\section{Conclusions}
We have approached the problem of the maximal determinant  for $(-1,1)$-matrices of skew type using $D_{2t}$-cocyclic matrices for $t=3,5,7,9$ and $11$. As an intermediate needed step, we have designed a procedure to decide whether or not  a $(-1,1)$-matrix $M$  (given  as an input)  is equivalent to a matrix of skew type. If so, it provides such a matrix of skew type, $K$. Furthermore, we have also computed $\det (K-I)$.

Let $g_k^c(2t)$ denote the maximum determinant of all the $(-1,1)$-matrices of skew type equivalent to a $D_{2t}$-cocyclic matrix. Respectively, let $f_k^c(2t)$ denote   the maximum determinant of $K-I$, all skew matrices with elements $0$ on the diagonal and $\pm 1$ elsewhere such that $K$ is equivalent to a  $D_{2t}$-cocyclic matrix.
Our empirical data suggest that if  $K$ is  a $(-1,1)$-matrix of skew type equivalent to a $D_{2t}$-cocyclic matrix, then
$$\det K=g_k^c(2t)\Longleftrightarrow \det(K-I)=f_k^c(2t).$$
This weighs heavily in favor of Lin's correspondence.
By definition, we have $g_k^c(2t)\leq g_k(2t)$ and $f_k^c(2t)\leq f_k(2t)$.

 Table \ref{tab-03}  shows the greatest values for the determinant of $(-1,1)$-matrices of skew type that we have computed using our cocyclic approach as well as the greatest values for the determinants of skew matrices $K-I$.   For the cases where $g_k(n)$ and $f_k(n)$ is known, $n=6,10,14$ and $26$, we have got $g^c_k(n)=g_k(n)$ and $f^c_k(n)=f_k(n)$, respectively. For $n=18$ and $n=22$, we believe that they are new records.

\begin{table}
\def~{\phantom0}
\catcode`\@=13
\def@{\phantom.}
\caption{Values of $g^c_k(n)$ and $f^c_k(n)$ up to $n=26$.\label{tab-03}}
\medskip
\begin{center}
\begin{tabular}{l|cc}
\hline
\multicolumn1l{\it n} & $g^c_k(n)/2^{n-1}$ &   $f^c_k(n)$ \\
\hline\hline
6                    & 5 & 81  \\
10                    & 125 & 33489  \\
14                   & 9477 & 44289025  \\
18        &  950480 &  70084620225 \\
22            &  179802493 &   216409254831025\\
26         & 54419558400  & 1073816597168995729\\
\hline
\end{tabular}
\end{center}
\end{table}

\section*{Acknowledgements}

The authors would like to thank Kristeen Cheng for her reading of this manuscript.


\end{document}